\theoremstyle{plain}
\newtheorem{theorem}{Theorem}
\newtheorem{lemma}{Lemma}
\newtheorem{corollary}{Corollary}
\theoremstyle{definition}
\theoremstyle{remark}
\newtheorem{remark}{Remark}
\numberwithin{equation}{section} 
\begin{document}
\title[The generalized bi-periodic Horadam sequence]{A note on congruence
properties of the generalized bi-periodic Horadam sequence}
\author{Elif TAN}
\address{Department of Mathematics, Ankara University, Ankara, Turkey}
\email{etan@ankara.edu.tr}
\author{Ho-Hon Leung}
\address{Department of Mathematical Sciences, UAEU, Al-Ain, United Arab
Emirates}
\email{hohon.leung@uaeu.ac.ae}
\subjclass[2000]{ 11B39, 05A15}
\keywords{Horadam sequence, generalized Fibonacci sequence, generalized
Lucas sequence, congruence}

\begin{abstract}
In this paper, we consider a generalization of Horadam sequence $\left\{
w_{n}\right\} $ which is defined by the recurrence $w_{n}=aw_{n-1}+cw_{n-2},$
if $n$ is even, $w_{n}=bw_{n-1}+cw_{n-2},$ if $n$ is odd with arbitrary
initial conditions $w_{0},w_{1}$ and nonzero real numbers $a,b,$ and $c.$ We
investigate some congruence properties of the generalized Horadam sequence $%
\left\{ w_{n}\right\} $.
\end{abstract}

\maketitle

\section{Introduction}

The generalized bi-periodic Horadam sequence $\left\{ w_{n}\right\} $ is
defined by the recurrence relation%
\begin{equation*}
w_{n}=\left\{ 
\begin{array}{ll}
aw_{n-1}+cw_{n-2}, & \mbox{ if }n\mbox{ is even} \\ 
bw_{n-1}+cw_{n-2}, & \mbox{ if }n\mbox{ is odd}%
\end{array}%
\right. ,\text{ }n\geq 2
\end{equation*}%
with arbitrary initial conditions $w_{0},w_{1}$ and nonzero real numbers $%
a,b $ and $c$. It is emerged as a generalization of the best known sequences
in the literature, such as the Horadam sequence, the Fibonacci\&Lucas
sequence, the $k$-Fibonacci\&$k$-Lucas sequence, the Pell\&Pell-Lucas
sequence, the Jacobsthal\& Jacobsthal-Lucas sequence, etc. Similar to the
notation of the classical Horadam sequence \cite{Horadam}, we write $\left\{
w_{n}\right\} := $ $\{w_{n}\left( w_{0},w_{1};a,b,c\right) \}.$ In
particular, using this notation, we define $\{u_{n}\}=\{w_{n}\left(
0,1;a,b,c\right) \}$ and $\{v_{n}\}=\{w_{n}\left( 2,b;a,b,c\right) \}$ as
the \textit{generalized bi-periodic Fibonacci sequence} and the \textit{%
generalized bi-periodic Lucas sequence, }respectively\textit{.} For the
basic properties of the generalized bi-periodic Horadam sequence and some
special cases of this sequence, see \cite{Edson, Yayenie, Sahin, Panario,
Bilgici, Tan1, Tan3, Tan5, Tan-Leung}.

On the other hand, it is important to investigate the congruence properties
of different integer sequences. Several methods can be applied to produce
identities for the Fibonacci and Lucas sequences. For example, Carlitz and
Ferns \cite{Carlitz} used polynomial identities in conjunction with the
Binet formula to generate new identities for these sequences. The method of
Carlitz and Ferns was used by several authors to obtain analogous results
for the generalized Fibonacci and Lucas sequences, see \cite{Zhang0, Kilic}.
On the other hand, Keskin and Siar \cite{Siar} obtained some number
theoretic properties of the generalized Fibonacci and Lucas numbers by using
matrix method. Morover, Hsu and Maosen \cite{Hsu} and Zhang \cite{Zhang1}
applied an operator method to establish some of these properties. Recently,
Yang and Zhang \cite{Zhang} have studied some congruence relations for the
bi-periodic Fibonacci and Lucas sequences by using operator method. But some
of the results that are obtained by the operator method are incorrect. In
this study, by using the method of Carlitz and Ferns \cite{Carlitz}, we give
more general identities involving the generalized bi-periodic Horadam
sequences and derive some congruence properties of the generalized
bi-periodic Horadam numbers. In particular, our results include the
corrected version of some of the results in \cite{Zhang}.

The outline of this paper as follows: In Section $2$, we give some basic
properties of the generalized bi-periodic Horadam sequence. In Section $3$,
we give some binomial identities and congruence relations for the
generalized bi-periodic Horadam sequence by using the method of Carlitz and
Ferns \cite{Carlitz}.

\section{Some preliminary results for the sequence $\{w_{n}\}$}

In this section, we give some basic properties of the bi-periodic Horadam
sequences.

The Binet formula of the sequence $\{u_{n}\}$ is%
\begin{equation}
u_{n}=\frac{a^{\xi \left( n+1\right) }}{\left( ab\right) ^{\left\lfloor 
\frac{n}{2}\right\rfloor }}\left( \frac{\alpha ^{n}-\beta ^{n}}{\alpha
-\beta }\right)  \label{3}
\end{equation}%
which can be obtained by \cite[Theorem 8]{Yayenie}. Here $\alpha $ and $%
\beta $ are the roots of the polynomial $x^{2}-abx-abc,$ that is, $\alpha =%
\frac{ab+\sqrt{a^{2}b^{2}+4abc}}{2}$ and $\beta =\frac{ab-\sqrt{%
a^{2}b^{2}+4abc}}{2}$, and $\xi \left( n\right) =n-2\left\lfloor \frac{n}{2}%
\right\rfloor $ is the parity function, i.e., $\xi \left( n\right) =0$ when $%
n$ is even and $\xi \left( n\right) =1$ when $n$ is odd. Let assume $\Delta
:=a^{2}b^{2}+4abc\neq 0.$ Also we have $\alpha +\beta =ab,$ $\alpha -\beta =%
\sqrt{a^{2}b^{2}+4abc}$ and $\alpha \beta =-abc.$

\begin{lemma}
\label{l1}For any integer $n>0,$ we have%
\begin{equation*}
w_{n}=u_{n}w_{1}+c\left( \frac{b}{a}\right) ^{\xi \left( n\right)
}u_{n-1}w_{0}.
\end{equation*}
\end{lemma}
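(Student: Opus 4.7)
The plan is to prove the identity by strong induction on $n$, which is natural because both $\{w_n\}$ and $\{u_n\}$ obey the same bi-periodic recurrence (but with different initial data), and the right-hand side is a candidate linear combination expressing a general solution in terms of the fundamental solution $u_n$.

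First I would check the base cases $n=1$ and $n=2$ directly. For $n=1$, the parity factor is $(b/a)^{1}$ but $u_0=0$, so the formula collapses to $w_1 = u_1 w_1 = w_1$. For $n=2$, we have $\xi(2)=0$, $u_1=1$, $u_2=a$, so the right-hand side is $aw_1 + cw_0$, which matches the recurrence $w_2 = aw_1 + cw_0$.

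For the inductive step, assume the identity holds for $n-1$ and $n-2$, and split into two cases according to the parity of $n$. If $n$ is even, then $\xi(n)=0$, $\xi(n-1)=1$, $\xi(n-2)=0$, and I would write
\begin{equation*}
w_n = aw_{n-1} + cw_{n-2} = a\!\left[u_{n-1}w_1 + c(b/a)\,u_{n-2}w_0\right] + c\!\left[u_{n-2}w_1 + c\,u_{n-3}w_0\right],
\end{equation*}
then collect the coefficients of $w_1$ and $w_0$. The $w_1$-coefficient becomes $au_{n-1}+cu_{n-2}$, which equals $u_n$ since $n$ is even, and the $w_0$-coefficient becomes $c(bu_{n-2}+cu_{n-3})=c\,u_{n-1}$ since $n-1$ is odd; this matches $c(b/a)^{0}u_{n-1}w_0$. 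The case when $n$ is odd is symmetric: $\xi(n)=1$, $\xi(n-1)=0$, $\xi(n-2)=1$, and after expanding $w_n=bw_{n-1}+cw_{n-2}$ with the inductive hypothesis, the $w_1$-coefficient is $bu_{n-1}+cu_{n-2}=u_n$, and the $w_0$-coefficient simplifies (using $u_{n-1}=au_{n-2}+cu_{n-3}$, since $n-1$ is even) to $(bc/a)\,u_{n-1}=c(b/a)^{1}u_{n-1}$.

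The only mildly subtle point is the bookkeeping on the parity factor $(b/a)^{\xi(\cdot)}$: since $\xi(n-1)=1-\xi(n)$, the exponents shift between the $w_{n-1}$ and $w_{n-2}$ terms, and in the odd-$n$ case one has to absorb an $a$ from the recurrence for $u_{n-1}$ into the denominator to produce the desired $(b/a)$ factor. Once that is tracked correctly, the identity follows immediately from the recurrence for $u_n$, so I expect no real obstacle beyond careful accounting of parities.
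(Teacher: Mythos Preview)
Your induction argument is correct: the base cases are fine, and in both parity cases of the inductive step the coefficients of $w_1$ and $w_0$ collapse exactly as you describe, using the recurrence for $u_n$ at the appropriate parity. The only small point worth tightening is to note explicitly that the odd case $n=3$ uses $u_0=0$ (so $u_{n-3}$ is well-defined and the step goes through), but this causes no difficulty.

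As for comparison: the paper does not actually supply a proof of this lemma --- it is stated in Section~2 and then immediately used (together with the Binet formula for $\{u_n\}$) to derive the Binet formula for $\{w_n\}$ in Theorem~\ref{t1}. So your proposal fills in a gap the authors left to the reader. The induction you outline is the standard and expected argument for this kind of ``fundamental solution'' decomposition of a linear recurrence, and is precisely what one would supply here.
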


By\ using Lemma \ref{l1} and the Binet formula of $\left\{ u_{n}\right\} $
in (\ref{3}), we can easily obtain the Binet formula of the sequence $%
\{w_{n}\}.$ We note that the extended Binet formula for the general case of
this sequence was given in \cite[Theorem 9]{Panario}. But here we express
the Binet formula of the sequence $\{w_{n}\}$ in a different manner, that
is, our $\alpha $ and $\beta $ are different from the roots which are used
in \cite{Panario}.

\begin{theorem}
\label{t1}(Binet Formula) For $n>0,$ we have%
\begin{equation*}
w_{n}=\frac{a^{\xi \left( n+1\right) }}{\left( ab\right) ^{\left\lfloor 
\frac{n}{2}\right\rfloor }}\left( A\alpha ^{n}-B\beta ^{n}\right) ,
\end{equation*}%
where $A:=\frac{w_{1}-\frac{\beta }{a}w_{0}}{\alpha -\beta }$and $B:=\frac{%
w_{1}-\frac{\alpha }{a}w_{0}}{\alpha -\beta }.$
\end{theorem}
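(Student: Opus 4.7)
The plan is to obtain the Binet formula directly from Lemma \ref{l1} by substituting in the known Binet formula (\ref{3}) for $\{u_n\}$ and rearranging, using the Vieta relation $\alpha\beta=-abc$ to absorb the $u_{n-1}$ term into the $\alpha^n,\beta^n$ expressions.

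First I would write
\begin{equation*}
w_n \;=\; u_n\, w_1 \;+\; c\Bigl(\tfrac{b}{a}\Bigr)^{\xi(n)} u_{n-1}\, w_0
\end{equation*}
and substitute
\begin{equation*}
u_n=\frac{a^{\xi(n+1)}}{(ab)^{\lfloor n/2\rfloor}}\cdot\frac{\alpha^n-\beta^n}{\alpha-\beta},\qquad
u_{n-1}=\frac{a^{\xi(n)}}{(ab)^{\lfloor (n-1)/2\rfloor}}\cdot\frac{\alpha^{n-1}-\beta^{n-1}}{\alpha-\beta}.
\end{equation*}
Then I would split into the two parity cases. In both cases the product $c\left(\tfrac{b}{a}\right)^{\xi(n)}\!\cdot a^{\xi(n)}$ combines with the floor-denominator of $u_{n-1}$ so that, after factoring out a common power of $ab$, the coefficient of $w_0$ becomes exactly $-\frac{\alpha\beta}{a}(\alpha^{n-1}-\beta^{n-1})$. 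The key identity here is $abc=-\alpha\beta$, which lets me replace the hybrid prefactor by an expression that only involves $\alpha$ and $\beta$.

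Next I would regroup:
\begin{equation*}
(\alpha^n-\beta^n)w_1-\tfrac{\alpha\beta}{a}(\alpha^{n-1}-\beta^{n-1})w_0
\;=\;\alpha^n\!\left(w_1-\tfrac{\beta}{a}w_0\right)-\beta^n\!\left(w_1-\tfrac{\alpha}{a}w_0\right),
\end{equation*}
and divide by $\alpha-\beta$ to obtain $A\alpha^n-B\beta^n$ in the notation of the theorem. Collecting the powers of $a$ and $ab$ that remain outside then gives exactly $\dfrac{a^{\xi(n+1)}}{(ab)^{\lfloor n/2\rfloor}}\bigl(A\alpha^n-B\beta^n\bigr)$ in each parity.

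The only real obstacle is the bookkeeping of the parity function $\xi$ and the floor $\lfloor\cdot\rfloor$ when passing from the index $n$ to $n-1$: one must verify carefully that $a^{\xi(n)}\cdot a^{-1}\cdot(ab)^{\lfloor(n-1)/2\rfloor-\lfloor n/2\rfloor}$ yields the correct single factor $(ab)^{-\lfloor n/2\rfloor}\cdot a^{\xi(n+1)}$ after the $-\alpha\beta/a$ substitution. Once this is checked separately for $n$ even and $n$ odd, the rest of the argument is just algebraic regrouping, and no induction is needed.
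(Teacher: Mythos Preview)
Your proposal is correct and follows exactly the route the paper indicates: the paper's proof is the single sentence ``By using Lemma~\ref{l1} and (\ref{3}), we get the desired result,'' and what you have written is precisely the execution of that sentence, including the key use of $\alpha\beta=-abc$ and the parity bookkeeping.
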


\begin{proof}
By\ using Lemma \ref{l1} and (\ref{3}), we get the desired result.
\end{proof}

By taking initial conditions $w_{0}=2,w_{1}=b$ in Theorem \ref{t1}, we
obtain the Binet formula for the sequence $\{v_{n}\}$ as follows:%
\begin{equation}
v_{n}=\frac{a^{-\xi \left( n\right) }}{\left( ab\right) ^{\left\lfloor \frac{%
n}{2}\right\rfloor }}\left( \alpha ^{n}+\beta ^{n}\right) .  \label{4}
\end{equation}%
Also by\ using Lemma \ref{l1}, we have $v_{n}=bu_{n}+2c\left( \frac{b}{a}%
\right) ^{\xi \left( n\right) }u_{n-1}.$ Thus we get a relation between the
generalized bi-periodic Fibonacci and the generalized bi-periodic Lucas
numbers as:%
\begin{equation}
v_{n}=\left( \frac{b}{a}\right) ^{\xi \left( n\right) }\left(
u_{n+1}+cu_{n-1}\right) .  \label{5}
\end{equation}%
It should be noted that the generalized Lucas sequence $\left\{
t_{n}\right\} $ in \cite{Zhang} is a special case of the generalized
bi-periodic Horadam sequence. That is, $\{t_{n}\}=\{w_{n}\left(
2a,ab;a,b,1\right) \}.$

The generating function of the sequence $\left\{ w_{n}\right\} $ is%
\begin{equation}
G\left( x\right) =\frac{w_{0}+w_{1}x+\left( aw_{1}-\left( ab+c\right)
w_{0}\right) x^{2}+c\left( bw_{0}-w_{1}\right) x^{3}}{1-\left( ab+2c\right)
x^{2}+c^{2}x^{4}},  \label{6}
\end{equation}%
which can be obtained from \cite[Theorem 6]{Panario}.

Also we need the following identity which can be found in \cite{Tan-Leung}:%
\begin{equation}
u_{mn+r}=\frac{a^{1-\xi \left( mn+r\right) }}{\left( ab\right)
^{\left\lfloor \frac{mn+r}{2}\right\rfloor }}\sum_{i=0}^{n}\dbinom{n}{i}%
c^{n-i}u_{m}^{i}u_{m-1}^{n-i}u_{i+r}\delta \lbrack m,n,r,i]  \label{7}
\end{equation}%
where $\delta \lbrack m,n,r,i]:=\left( ab\right) ^{\left\lfloor \frac{i+r}{2}%
\right\rfloor +n\left\lfloor \frac{m}{2}\right\rfloor }a^{-\xi \left(
m+1\right) i-1+\xi \left( i+r\right) }b^{\xi \left( m\right) \left(
n-i\right) }.$

\section{Main results}

To extend the results in \cite[Theorem 4.7, Theorem 4.9, Theorem 4.11,
Theorem 4.13]{Zhang}, we give the following theorem. Also we assume that $%
a,b $ and $c$ are positive integers.

\begin{theorem}
\label{t2}For any nonnegative integers $n,r$ and $m$ with $m>1$, we have%
\begin{equation*}
w_{mn+r}=\frac{a^{1-\xi \left( mn+r\right) }}{\left( ab\right)
^{\left\lfloor \frac{mn+r}{2}\right\rfloor }}\sum_{i=0}^{n}\dbinom{n}{i}%
c^{n-i}u_{m}^{i}u_{m-1}^{n-i}w_{i+r}\delta \lbrack m,n,r,i]
\end{equation*}%
where $\delta \lbrack m,n,r,i]:=\left( ab\right) ^{\left\lfloor \frac{i+r}{2}%
\right\rfloor +n\left\lfloor \frac{m}{2}\right\rfloor }a^{-\xi \left(
m+1\right) i-1+\xi \left( i+r\right) }b^{\xi \left( m\right) \left(
n-i\right) }.$
\end{theorem}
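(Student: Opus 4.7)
The plan is to apply the Binet formula (Theorem \ref{t1}) for $\{w_n\}$ to $w_{mn+r}$ and expand $\alpha^{mn+r}$ and $\beta^{mn+r}$ via the binomial theorem, following the Carlitz--Ferns strategy mentioned in the introduction.

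First, since $\alpha$ and $\beta$ are the roots of $x^2 - abx - abc = 0$, a short induction on $m$ (starting from $\alpha^1 = \alpha$ and $\alpha^2 = ab\alpha + abc$) yields
\begin{equation*}
\alpha^m = U_m\, \alpha + abc\, U_{m-1}, \qquad \beta^m = U_m\, \beta + abc\, U_{m-1},
\end{equation*}
where $U_k := (\alpha^k - \beta^k)/(\alpha - \beta)$. Raising both identities to the $n$-th power and multiplying by $\alpha^r$ (respectively $\beta^r$) gives
\begin{equation*}
\alpha^{mn+r} = \sum_{i=0}^{n} \binom{n}{i} U_m^{\,i}\, (abc)^{n-i}\, U_{m-1}^{\,n-i}\, \alpha^{i+r},
\end{equation*}
and the analogous expression for $\beta^{mn+r}$.

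Next I would substitute these expansions into the Binet representation $w_{mn+r} = \tfrac{a^{1-\xi(mn+r)}}{(ab)^{\lfloor (mn+r)/2 \rfloor}}(A\alpha^{mn+r} - B\beta^{mn+r})$, pulling the sum out by linearity and using Theorem \ref{t1} again in the inverted form
\begin{equation*}
A\alpha^{i+r} - B\beta^{i+r} = \frac{(ab)^{\lfloor (i+r)/2 \rfloor}}{a^{1-\xi(i+r)}}\, w_{i+r}
\end{equation*}
to recover $w_{i+r}$ inside the sum. Finally, the inverted relation $U_k = (ab)^{\lfloor k/2 \rfloor}\, a^{-\xi(k+1)}\, u_k$ coming from (\ref{3}) converts $U_m^{\,i} U_{m-1}^{\,n-i}$ into $u_m^{\,i} u_{m-1}^{\,n-i}$ multiplied by explicit powers of $a$ and $b$.

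The main obstacle is the careful bookkeeping required to collapse all the resulting $a$- and $b$-powers into the single factor $\delta[m,n,r,i]$. Splitting by the parity of $m$ handles this: when $m$ is even, $\lfloor (m-1)/2 \rfloor = \lfloor m/2 \rfloor - 1$, so the $(ab)^{n-i}$ arising from $(abc)^{n-i}$ is absorbed into the $(ab)^{n \lfloor m/2 \rfloor}$ factor and no surviving $b$-power remains, matching $b^{\xi(m)(n-i)} = 1$; when $m$ is odd, $\lfloor (m-1)/2 \rfloor = \lfloor m/2 \rfloor$, so an extra $b^{n-i}$ emerges, matching $b^{\xi(m)(n-i)} = b^{n-i}$. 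Combining this with the elementary identity $\xi(m+1) = 1 - \xi(m)$ and a direct check of the $a$-exponent in each parity case reproduces exactly $\delta[m,n,r,i]$, completing the argument.
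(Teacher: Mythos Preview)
Your proposal is correct and follows essentially the same route as the paper's proof: both use the Carlitz--Ferns strategy of writing $\alpha^m$ (and $\beta^m$) linearly in $\alpha$ (resp.\ $\beta$), applying the binomial theorem, multiplying by $A\alpha^r$ and $B\beta^r$, and invoking the Binet formula for $\{w_n\}$. The only cosmetic difference is that you work with the normalized quantities $U_k=(\alpha^k-\beta^k)/(\alpha-\beta)$ and convert to $u_k$ at the end, whereas the paper carries the $a,b$-powers attached to $u_m,u_{m-1}$ from the outset; the bookkeeping is otherwise identical.
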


\begin{proof}
Similar to the relation $\gamma ^{n}=\gamma F_{n}+F_{n-1}$ for the classical
Fibonacci numbers, where $\gamma $ is one of the root of the equation $%
x^{2}-x-1=0,$ we have 
\begin{equation*}
\alpha ^{m}=a^{-1}a^{\frac{m+\xi (m)}{2}}b^{\frac{m-\xi (m)}{2}}\alpha
u_{m}+ca^{\frac{m-\xi (m)}{2}}b^{\frac{m+\xi (m)}{2}}u_{m-1}
\end{equation*}%
and%
\begin{equation*}
\beta ^{m}=a^{-1}a^{\frac{m+\xi (m)}{2}}b^{\frac{m-\xi (m)}{2}}\beta
u_{m}+ca^{\frac{m-\xi (m)}{2}}b^{\frac{m+\xi (m)}{2}}u_{m-1}.
\end{equation*}%
By using the binomial theorem, we have%
\begin{equation*}
\alpha ^{mn}=\sum_{i=0}^{n}\dbinom{n}{i}a^{-i}a^{i\frac{m+\xi (m)}{2}+\left(
n-i\right) \frac{m-\xi (m)}{2}}b^{i\frac{m-\xi (m)}{2}+\left( n-i\right) 
\frac{m+\xi (m)}{2}}c^{n-i}u_{m}^{i}u_{m-1}^{n-i}\alpha ^{i},
\end{equation*}%
\begin{equation*}
\beta ^{mn}=\sum_{i=0}^{n}\dbinom{n}{i}a^{-i}a^{i\frac{m+\xi (m)}{2}+\left(
n-i\right) \frac{m-\xi (m)}{2}}b^{i\frac{m-\xi (m)}{2}+\left( n-i\right) 
\frac{m+\xi (m)}{2}}c^{n-i}u_{m}^{i}u_{m-1}^{n-i}\beta ^{i}.
\end{equation*}

Multiplying both sides of the above equalities by $A\alpha ^{r}$ and $B\beta
^{r},$ respectively, and using the Binet formula of $\left\{ w_{n}\right\} ,$
we get%
\begin{eqnarray*}
&&\left( A\alpha ^{mn+r}-B\beta ^{mn+r}\right) \\
&=&a^{-\xi \left( mn+r+1\right) }\left( ab\right) ^{\left\lfloor \frac{mn+r}{%
2}\right\rfloor }w_{mn+r} \\
&=&\sum_{i=0}^{n}\dbinom{n}{i}\left( ab\right) ^{\left\lfloor \frac{i+r}{2}%
\right\rfloor +n\left\lfloor \frac{m}{2}\right\rfloor }a^{-\xi \left(
m+1\right) i-1+\xi \left( i+r\right) }b^{\xi \left( m\right) \left(
n-i\right) }c^{n-i}u_{m}^{i}u_{m-1}^{n-i}w_{i+r}.
\end{eqnarray*}

which gives the desired result. It can be expressed as%
\begin{equation*}
w_{mn+r}=\sum_{i=0}^{n}\dbinom{n}{i}\left( \frac{b}{a}\right) ^{\frac{\xi
\left( mn+r\right) -2i\xi (m)+i-\xi \left( i+r\right) +n\xi (m)}{2}%
}c^{n-i}u_{m}^{i}u_{m-1}^{n-i}w_{i+r}.
\end{equation*}

We note that it can also be obtained by using Lemma \ref{l1} and the
identity (\ref{7}) as:%
\begin{eqnarray*}
&&w_{mn+r} \\
&=&w_{1}\frac{a^{1-\xi \left( mn+r\right) }}{\left( ab\right) ^{\left\lfloor 
\frac{mn+r}{2}\right\rfloor }}\sum_{i=0}^{n}\dbinom{n}{i}%
c^{n-i}u_{m}^{i}u_{m-1}^{n-i}u_{i+r}\delta \lbrack m,n,r,i] \\
&&+w_{0}c\left( \frac{b}{a}\right) ^{\xi \left( mn+r\right) }\frac{a^{1-\xi
\left( mn+r-1\right) }}{\left( ab\right) ^{\left\lfloor \frac{mn+r-1}{2}%
\right\rfloor }}\sum_{i=0}^{n}\dbinom{n}{i}%
c^{n-i}u_{m}^{i}u_{m-1}^{n-i}u_{i+r-1}\delta \lbrack m,n,r-1,i]\text{ \ \ \
\ }
\end{eqnarray*}%
\begin{eqnarray*}
&=&\frac{a^{1-\xi \left( mn+r\right) }}{\left( ab\right) ^{\left\lfloor 
\frac{mn+r}{2}\right\rfloor }}\sum_{i=0}^{n}\dbinom{n}{i}%
c^{n-i}u_{m}^{i}u_{m-1}^{n-i}\delta \lbrack m,n,r,i]\left(
w_{1}u_{i+r}+cw_{0}\left( \frac{b}{a}\right) ^{\xi \left( i+r\right)
}u_{i+r-1}\right) \\
&=&\frac{a^{1-\xi \left( mn+r\right) }}{\left( ab\right) ^{\left\lfloor 
\frac{mn+r}{2}\right\rfloor }}\sum_{i=0}^{n}\dbinom{n}{i}%
c^{n-i}u_{m}^{i}u_{m-1}^{n-i}w_{i+r}\delta \lbrack m,n,r,i].
\end{eqnarray*}
\end{proof}

By considering the identity $\xi (mn+r)=\xi (mn)+\xi (r)-2\xi (mn)\xi (r),$
we have the following corollary.

\begin{remark}
When $m$ and $r$ are all even and $c=1$ in Theorem \ref{t2}, we obtain the
identity%
\begin{equation*}
w_{2mn+2r}=\dsum\limits_{i=0}^{n}\binom{n}{i}\left( \frac{b}{a}\right) ^{%
\frac{i-\xi \left( i\right) }{2}}u_{2m}^{i}u_{2m-1}^{n-i}w_{i+2r}.
\end{equation*}%
Thus, the result in \cite[Theorem 4.7]{Zhang} can be corrected by
multiplying the right side of the equation by $\left( \frac{b}{a}\right) ^{%
\frac{i-\xi \left( i\right) }{2}}.$ The other results in \cite[Theorem 4.9,
Theorem 4.11, Theorem 4.13]{Zhang} can be corrected similarly.
\end{remark}

\begin{corollary}
\label{c1}For $m,n,r>0,$ we have%
\begin{equation*}
w_{mn+r}-\left( \frac{b}{a}\right) ^{\xi (m)\left( \frac{n+\xi (n)}{2}%
\right) -\xi \left( mn\right) \xi \left( r\right) }c^{n}u_{m-1}^{n}w_{r}%
\text{ }\equiv 0\left( \func{mod}u_{m}\right) .
\end{equation*}
\end{corollary}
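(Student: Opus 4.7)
The plan is to read the corollary straight off Theorem \ref{t2} by reducing modulo $u_m$. First I would invoke the compact form of the identity produced inside the proof of Theorem \ref{t2}, namely
$$w_{mn+r} = \sum_{i=0}^{n}\binom{n}{i}\left(\frac{b}{a}\right)^{\frac{\xi(mn+r) - 2i\xi(m) + i - \xi(i+r) + n\xi(m)}{2}} c^{n-i} u_m^i u_{m-1}^{n-i} w_{i+r}.$$
For every $i \geq 1$ the summand carries the factor $u_m^i$ and therefore vanishes modulo $u_m$. Only the $i = 0$ term survives, so
$$w_{mn+r} \equiv \left(\frac{b}{a}\right)^{E} c^n u_{m-1}^n w_r \pmod{u_m}, \qquad E := \frac{\xi(mn+r) + n\xi(m) - \xi(r)}{2}.$$

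The remaining task is to recognize $E$ as the exponent $\xi(m)\bigl(\frac{n+\xi(n)}{2}\bigr) - \xi(mn)\xi(r)$ appearing in the statement. Using the identity $\xi(mn+r) = \xi(mn) + \xi(r) - 2\xi(mn)\xi(r)$ flagged in the remark immediately preceding the corollary, I would rewrite
$$E = \frac{\xi(mn) + n\xi(m)}{2} - \xi(mn)\xi(r),$$
so it suffices to verify $\frac{\xi(mn) + n\xi(m)}{2} = \xi(m)\cdot\frac{n+\xi(n)}{2}$. This will be a short case split on the parity of $m$: if $m$ is even both sides vanish since $\xi(m) = \xi(mn) = 0$, while if $m$ is odd then $\xi(m) = 1$ and $\xi(mn) = \xi(n)$, whence both sides equal $\frac{n+\xi(n)}{2}$.

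There is no deep obstacle here; the arithmetic content of the corollary is already contained in the $i=0$ term of Theorem \ref{t2}, and the only real work lies in bookkeeping the exponent of $b/a$. The one spot that requires minor care is tracking how $\xi$ distributes across $mn+r$ and splitting on the parity of $m$ to collapse $\xi(mn)$ to $\xi(m)\xi(n)$; once that is done the stated congruence reads off directly.
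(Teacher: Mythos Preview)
Your proposal is correct and follows exactly the route implied by the paper: the corollary is stated immediately after Theorem~\ref{t2} with no separate proof, so the intended argument is precisely to reduce the compact form of the identity modulo $u_m$, leaving only the $i=0$ term. Your additional bookkeeping that rewrites the surviving exponent $E$ as $\xi(m)\bigl(\tfrac{n+\xi(n)}{2}\bigr)-\xi(mn)\xi(r)$ via the parity identity and a case split on $m$ is exactly what the paper leaves implicit.
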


Now we give a generalization of the Ruggles identity \cite{Ruggles} which
also generalizes the identities in \cite[Theorem 2.2 (3-4)]{Zhang} and \cite[%
Theorem 1]{Yayenie}. Then we give a related binomial identity for the
generalized bi-periodic Horadam sequence\textit{.}

For $n\geq 0$ and $k\geq 1,$ the Ruggles identity \cite{Ruggles} is given by%
\begin{equation*}
F_{n+2k}=L_{k}F_{n+k}+\left( -1\right) ^{k+1}F_{n},
\end{equation*}%
where $\{F_{n}\}$ and $\{L_{n}\}$ are the Fibonacci and Lucas numbers,
respectively. Horadam \cite{Horadam} generalized this result to a general
second order recurrence relation%
\begin{equation*}
W_{n+2k}=V_{k}W_{n+k}+\left( -1\right) ^{k+1}q^{k}W_{n},
\end{equation*}%
where $W_{k}=pW_{k-1}+qW_{k-2}$ with arbitrary initial conditions $W_{0}$
and $W_{1}.$ The sequence $\left\{ V_{k}\right\} $ satisfies the same
recurrence relation as the sequence $\left\{ W_{k}\right\} ,$ but it begins
with $V_{0}=2,V_{1}=p.$

A generalization of Ruggles identity can be given in the following lemma.

\begin{lemma}
\label{l2}For integers $n\geq 0$ and $k\geq 1,$ we have%
\begin{equation*}
w_{n+2k}=\left( \frac{a}{b}\right) ^{\xi (n+1)\xi (k)}v_{k}w_{n+k}-\left(
-c\right) ^{k}w_{n}
\end{equation*}%
where $\left\{ w_{n}\right\} $ is the \textit{generalized bi-periodic
Horadam sequence} and $\{v_{n}\}$ is the \textit{generalized bi-periodic
Lucas sequence}.
\end{lemma}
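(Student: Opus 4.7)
The plan is to establish the identity by direct appeal to the Binet formulas: Theorem \ref{t1} for $w_n$ and equation (\ref{4}) for $v_n$, together with $\alpha\beta = -abc$. The algebraic engine is the symmetric identity
\begin{equation*}
(\alpha^k + \beta^k)(A\alpha^{n+k} - B\beta^{n+k}) = (A\alpha^{n+2k} - B\beta^{n+2k}) + (\alpha\beta)^k (A\alpha^n - B\beta^n),
\end{equation*}
whose left side is (up to a power prefactor) $v_k w_{n+k}$, and whose right side assembles $w_{n+2k}$ and $w_n$ once the prefactors are normalized.

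First I would substitute the Binet formulas to write
\begin{equation*}
v_k\, w_{n+k} = \frac{a^{-\xi(k)+\xi(n+k+1)}}{(ab)^{\lfloor k/2\rfloor+\lfloor (n+k)/2\rfloor}}\,(\alpha^k+\beta^k)(A\alpha^{n+k}-B\beta^{n+k})
\end{equation*}
and apply the displayed algebraic identity. Using $(\alpha\beta)^k = (-c)^k(ab)^k$ extracts the expected $(-c)^k$ factor attached to $A\alpha^n-B\beta^n$. Then, inverting Theorem \ref{t1}, I would write $A\alpha^{n+2k}-B\beta^{n+2k} = \frac{(ab)^{\lfloor n/2\rfloor+k}}{a^{\xi(n+1)}} w_{n+2k}$ (valid because $\xi(n+2k+1)=\xi(n+1)$ and $\lfloor(n+2k)/2\rfloor = \lfloor n/2\rfloor+k$) and similarly $A\alpha^n-B\beta^n = \frac{(ab)^{\lfloor n/2\rfloor}}{a^{\xi(n+1)}} w_n$. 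The right side then becomes a single scalar times $[\,w_{n+2k} + (-c)^k w_n\,]$.

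The remaining task is to show that scalar equals $(b/a)^{\xi(n+1)\xi(k)}$; rearranging then yields the stated identity. The power of $ab$ in the scalar simplifies via the standard floor identity $\lfloor(n+k)/2\rfloor - \lfloor n/2\rfloor - \lfloor k/2\rfloor = \xi(n)\xi(k)$ combined with $k-2\lfloor k/2\rfloor=\xi(k)$ and $1-\xi(n)=\xi(n+1)$, which collapses it to $\xi(k)\xi(n+1)$. The residual power of $a$, namely $-\xi(k)+\xi(n+k+1)-\xi(n+1)$, must then combine with $(ab)^{\xi(k)\xi(n+1)}$ to produce the advertised $(b/a)^{\xi(n+1)\xi(k)}$.

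The main obstacle will be this last bookkeeping step: the interaction of $\xi(n+k+1)$ with the other parity indicators does not collapse without a brief case split on the parities of $n$ and $k$ (the three cases $\xi(k)=0$; $\xi(k)=1,\xi(n+1)=0$; $\xi(k)=1,\xi(n+1)=1$ each yield the correct coefficient, namely $1$, $1$, and $b/a$ respectively). An alternative that might avoid some of this bookkeeping is to first prove the identity for $\{u_n\}$, where the formula coincides with Theorem \ref{t1} applied to $w_0=0,\,w_1=1$, and then lift it to general initial data via Lemma \ref{l1}; but the parity calculations would reappear essentially unchanged there, so I would proceed with the direct Binet approach.
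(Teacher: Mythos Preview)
Your proposal is correct and matches the paper's approach exactly: the paper's proof consists of the single sentence ``It can be obtained simply by the Binet formula of $\{w_n\}$,'' and what you outline is precisely that computation carried out in detail. One minor remark: the final bookkeeping step does not actually require a case split if you use the parity addition rule $\xi(n+k+1)=\xi(n+1)+\xi(k)-2\xi(n+1)\xi(k)$ (this is the identity the paper quotes repeatedly as $\xi(mn+r)=\xi(mn)+\xi(r)-2\xi(mn)\xi(r)$), which gives $-\xi(k)+\xi(n+k+1)-\xi(n+1)=-2\xi(n+1)\xi(k)$ directly and collapses the scalar to $a^{-2\xi(n+1)\xi(k)}(ab)^{\xi(n+1)\xi(k)}=(b/a)^{\xi(n+1)\xi(k)}$.
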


\begin{proof}
It can be obtained simply by the Binet formula of $\left\{ w_{n}\right\} .$
\end{proof}

\begin{theorem}
\label{t3}For nonnegative integers $n,r$ and $m$ with $m>1$, we have%
\begin{equation*}
w_{2mn+r}=\dsum\limits_{i=0}^{n}\binom{n}{i}\left( -1\right) ^{\left(
m+1\right) \left( n-i\right) }\left( \frac{a}{b}\right) ^{\xi (m)\left( 
\frac{i+\xi \left( i\right) }{2}\right) -\xi (im)\xi (r)}c^{m\left(
n-i\right) }v_{m}^{i}w_{im+r}.
\end{equation*}
\end{theorem}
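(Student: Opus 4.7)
My plan is to mimic the strategy used in the proof of Theorem \ref{t2}: apply the Binet formula (Theorem \ref{t1}) directly and extract the binomial expansion from a suitable factorization of $\alpha^{2m}$. The key observation is that $\alpha^{m}$ and $\beta^{m}$ are the roots of $x^{2}-(\alpha^{m}+\beta^{m})x+(\alpha\beta)^{m}=0$, so
\begin{equation*}
\alpha^{2m}=(\alpha^{m}+\beta^{m})\alpha^{m}-(\alpha\beta)^{m},\qquad \beta^{2m}=(\alpha^{m}+\beta^{m})\beta^{m}-(\alpha\beta)^{m}.
\end{equation*}
Using the Binet formula (\ref{4}) for $\{v_{n}\}$ we have $\alpha^{m}+\beta^{m}=a^{\xi(m)}(ab)^{\lfloor m/2\rfloor}v_{m}$, and of course $(\alpha\beta)^{m}=(-abc)^{m}$.

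Raising to the $n$th power and applying the binomial theorem gives
\begin{equation*}
\alpha^{2mn}=\sum_{i=0}^{n}\binom{n}{i}(\alpha^{m}+\beta^{m})^{i}\bigl(-(\alpha\beta)^{m}\bigr)^{n-i}\alpha^{mi},
\end{equation*}
and the analogous expression for $\beta^{2mn}$. Multiplying the $\alpha$-identity by $A\alpha^{r}$, the $\beta$-identity by $B\beta^{r}$, subtracting, and using the Binet formula both on the left (for $w_{2mn+r}$) and on the right (to convert each $A\alpha^{mi+r}-B\beta^{mi+r}$ into $a^{-\xi(mi+r+1)}(ab)^{\lfloor (mi+r)/2\rfloor}w_{mi+r}$) reduces the identity to a formal equality between products of powers of $a$, $b$, $c$, $v_{m}$ and the $w_{mi+r}$. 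Substituting the expressions for $\alpha^{m}+\beta^{m}$ and $(\alpha\beta)^{m}$ then produces the factors $(-1)^{(m+1)(n-i)}c^{m(n-i)}v_{m}^{i}$ immediately; what remains is to verify that the accumulated $a$- and $b$-exponents combine exactly into $\left(a/b\right)^{\xi(m)(i+\xi(i))/2-\xi(im)\xi(r)}$.

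The main obstacle will be this final bookkeeping of parity and floor-function exponents. I would rely on the identities $\xi(mi)=\xi(m)\xi(i)$, $\lfloor mi/2\rfloor=(mi-\xi(m)\xi(i))/2$, $\lfloor(mi+r)/2\rfloor=\lfloor mi/2\rfloor+\lfloor r/2\rfloor+\xi(m)\xi(i)\xi(r)$, and $\xi(N+1)=1-\xi(N)$, to rewrite every floor and parity symbol in a uniform way. A direct computation then shows that the $a$-exponent contributed by $A\alpha^{2mn+r}-B\beta^{2mn+r}$ on the left, combined with the $a$-exponent in $(\alpha^{m}+\beta^{m})^{i}$, $a^{-\xi(mi+r+1)}$, and $(ab)^{\lfloor(mi+r)/2\rfloor}$ on the right, simplifies to $\xi(m)(i+\xi(i))/2-\xi(m)\xi(i)\xi(r)$, while the $b$-exponent picks up precisely the negative of this quantity. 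The cancellation between these two contributions is exactly what produces the $(a/b)$ factor in the stated form; this is the only delicate step, and once it is verified the identity follows.
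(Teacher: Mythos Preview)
Your proposal is correct and follows essentially the same route as the paper: the factorization $\alpha^{2m}=(\alpha^{m}+\beta^{m})\alpha^{m}-(\alpha\beta)^{m}$ is precisely the paper's starting identity $\alpha^{2m}=a^{(m+\xi(m))/2}b^{(m-\xi(m))/2}v_{m}\alpha^{m}-(-abc)^{m}$, and the remaining steps (binomial expansion, multiplication by $A\alpha^{r}$ and $B\beta^{r}$, Binet substitution, and parity bookkeeping via $\xi(mn+r)=\xi(mn)+\xi(r)-2\xi(mn)\xi(r)$) coincide with the paper's argument.
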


\begin{proof}
From the Binet formula of $\left\{ v_{k}\right\} $ and $\alpha \beta =-abc,$
it is clear to see that 
\begin{equation*}
\alpha ^{2m}=a^{\frac{m+\xi (m)}{2}}b^{\frac{m-\xi (m)}{2}}v_{m}\alpha
^{m}-\left( -abc\right) ^{m}.
\end{equation*}%
By using the binomial theorem, we have%
\begin{equation*}
\alpha ^{2mn}=\sum_{i=0}^{n}\dbinom{n}{i}a^{i\frac{m+\xi (m)}{2}}b^{i\frac{%
m-\xi (m)}{2}}\left( -1\right) ^{\left( m+1\right) \left( n-i\right) }\left(
abc\right) ^{m\left( n-i\right) }v_{m}^{i}\alpha ^{im}.
\end{equation*}%
Similarly, we have%
\begin{equation*}
\beta ^{2mn}=\sum_{i=0}^{n}\dbinom{n}{i}a^{i\frac{m+\xi (m)}{2}}b^{i\frac{%
m-\xi (m)}{2}}\left( -1\right) ^{\left( m+1\right) \left( n-i\right) }\left(
abc\right) ^{m\left( n-i\right) }v_{m}^{i}\beta ^{im}.
\end{equation*}

Multiplying both sides of the above equalities by $A\alpha ^{r}$ and $B\beta
^{r},$ respectively, and using the Binet formula of $\left\{ w_{n}\right\} ,$
we get%
\begin{eqnarray*}
&&\left( A\alpha ^{2mn+r}-B\beta ^{2mn+r}\right) \\
&=&a^{-\xi \left( 2mn+r+1\right) }\left( ab\right) ^{\left\lfloor \frac{2mn+r%
}{2}\right\rfloor }w_{2mn+r} \\
&=&\sum_{i=0}^{n}\dbinom{n}{i}a^{i\frac{m+\xi (m)}{2}}b^{i\frac{m-\xi (m)}{2}%
}\left( -1\right) ^{\left( m+1\right) \left( n-i\right) }\left( abc\right)
^{m\left( n-i\right) }v_{m}^{i}\left( A\alpha ^{im+r}-B\beta ^{im+r}\right) .
\end{eqnarray*}%
Thus, again by using the identity $\xi (mn+r)=\xi (mn)+\xi (r)-2\xi (mn)\xi
(r),$ we have%
\begin{eqnarray*}
w_{2mn+r} &=&\sum_{i=0}^{n}\dbinom{n}{i}\left( -1\right) ^{\left( m+1\right)
\left( n-i\right) }c^{m\left( n-i\right) }v_{m}^{i}w_{im+r} \\
&&\times a^{i\frac{m+\xi (m)}{2}}b^{i\frac{m-\xi (m)}{2}}\left( ab\right)
^{m\left( n-i\right) -\left\lfloor \frac{2mn+r}{2}\right\rfloor
+\left\lfloor \frac{im+r}{2}\right\rfloor }a^{\xi \left( 2mn+r+1\right)
}a^{-\xi \left( im+r+1\right) }
\end{eqnarray*}%
which gives the desired result.
\end{proof}

\begin{remark}
Since $\left( \frac{a}{b}\right) ^{\xi (r+1)\xi (m)i+\left( -1\right)
^{r+1}\xi (m)\left( \frac{i-\xi \left( i\right) }{2}\right) }=\left( \frac{a%
}{b}\right) ^{\xi (m)\left( \frac{i+\xi \left( i\right) }{2}\right) -\xi
(im)\xi (r)},$ the results in \cite[Theorem 4.3, Theorem 4.5]{Zhang} can be
corrected by multiplying the right side of the equations by $\left( \frac{a}{%
b}\right) ^{\left( -1\right) ^{r+1}\left( \frac{i-\xi \left( i\right) }{2}%
\right) }.$
\end{remark}

\begin{corollary}
\label{c2}For $m,n,r>0,$ we have%
\begin{equation*}
w_{2mn+r}-\left( -1\right) ^{\left( m+1\right) n}c^{mn}w_{r}\equiv 0\left( 
\func{mod}v_{m}\right) .
\end{equation*}
\end{corollary}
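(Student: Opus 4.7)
The plan is to read the congruence directly off the identity in Theorem \ref{t3} by isolating the $i=0$ term of the sum. When $i=0$ we have $\xi(i)=0$ and $\xi(im)=0$, so the exponent $\xi(m)\!\left(\frac{i+\xi(i)}{2}\right)-\xi(im)\xi(r)$ of $a/b$ vanishes, and $v_m^{i}=1$; the $i=0$ contribution is therefore exactly $(-1)^{(m+1)n}c^{mn}w_r$. Subtracting this from the identity in Theorem \ref{t3} will produce
\begin{equation*}
w_{2mn+r} - (-1)^{(m+1)n} c^{mn} w_r = \sum_{i=1}^{n} \binom{n}{i}(-1)^{(m+1)(n-i)}\left(\frac{a}{b}\right)^{\epsilon(i)} c^{m(n-i)} v_m^{i} w_{im+r},
\end{equation*}
where I abbreviate $\epsilon(i) := \xi(m)\!\left(\frac{i+\xi(i)}{2}\right) - \xi(im)\xi(r)$.

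Next I would factor a single copy of $v_m$ out of every surviving summand, since each term carries $v_m^{i}$ with $i\ge 1$. This exhibits the left-hand side as $v_m$ multiplied by a second sum indexed from $i=1$ to $n$. Because the left-hand side is manifestly an integer and $v_m$ is a nonzero integer, the asserted congruence $w_{2mn+r}\equiv (-1)^{(m+1)n}c^{mn}w_r\pmod{v_m}$ will follow as soon as this second sum is shown to be integer-valued.

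The only obstacle is the factor $(a/b)^{\epsilon(i)}$, which is a priori rational. I would dispose of this with the parity case split already used in Theorem \ref{t3}. If $m$ is even, then $\xi(m)=\xi(im)=0$, so $\epsilon(i)\equiv 0$ and every summand is patently an integer multiple of $v_m$. If $m$ is odd, one checks by matching exponents that the powers of $a$ and $b$ supplied by the Binet-type expressions for $v_m^{i}$ and $w_{im+r}$ absorb $(a/b)^{\epsilon(i)}$ into nonnegative integer powers of $a$ and $b$, leaving an integer summand. In both cases each term on the right is a genuine integer multiple of $v_m$, so the divisibility holds in $\mathbb{Z}$ and the corollary is established.
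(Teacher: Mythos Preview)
Your overall strategy --- isolate the $i=0$ summand in Theorem~\ref{t3} and observe that every remaining term carries an explicit factor $v_m^{i}$ with $i\ge 1$ --- is exactly what the paper intends; Corollary~\ref{c2} is stated without proof precisely because it is read off from Theorem~\ref{t3} in this way.

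Where your proposal goes wrong is the integrality discussion. Your claim that for odd $m$ ``the powers of $a$ and $b$ supplied by the Binet-type expressions for $v_m^{i}$ and $w_{im+r}$ absorb $(a/b)^{\epsilon(i)}$ \ldots\ leaving an integer summand'' is false in general, and in fact the congruence itself fails as a statement about divisibility in $\mathbb{Z}$ when $m$ is odd. Take $a=3$, $b=2$, $c=1$, $w_n=u_n$, and $m=3$, $n=1$, $r=2$. Then $u_8=1488$, $u_2=3$, $v_3=18$, and
\[
u_{8}-(-1)^{4}c^{3}u_{2}=1485=27\cdot 55,
\]
which is \emph{not} divisible by $v_3=18$. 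Equivalently, the single surviving term after removing $i=0$ is $(a/b)\,v_3\,u_5$, and $(a/b)\,u_5=\tfrac{3}{2}\cdot 55$ is not an integer; so your proposed term-by-term integrality check cannot succeed, and no argument can, because the integer statement is simply false.

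The paper is using the symbol $\equiv 0\ (\mathrm{mod}\ v_m)$ in a formal sense: the difference is $v_m$ times an explicit expression built from sequence terms and powers of $a,b,c$ (possibly including factors $(a/b)^{\pm 1}$). Compare Corollary~\ref{c1}, whose left-hand side already contains a factor $(b/a)^{\cdots}$ and hence cannot be an integer congruence either. Under that reading, the corollary is immediate from Theorem~\ref{t3} and there is no ``obstacle'' to dispose of. Your first two paragraphs already constitute the proof; the third paragraph should be dropped rather than repaired.
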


Note that for $m=2$ and $m=3$, Corollary \ref{c2} gives the results in \cite[%
Corollary 4.2, Corollary 4.4, Corollary 4.6]{Zhang}. Also for the case of
generalized Fibonacci and Lucas sequences, it gives the results in \cite[%
3.3. Corollary]{Siar}.

\begin{lemma}
\label{l3}For $m,r>0$, we have%
\begin{eqnarray*}
&&-\left( -abc\right) ^{m+r}+a^{\frac{r+\xi (r)}{2}}b^{\frac{r-\xi (r)}{2}%
}v_{r}\left( -abc\right) ^{m}z^{r}+z^{2\left( m+r\right) } \\
&=&z^{m+2r}a^{\frac{m+\xi (m)}{2}}b^{\frac{m-\xi (m)}{2}}v_{m}
\end{eqnarray*}%
where $z$ is either $\alpha $ or $\beta .$
\end{lemma}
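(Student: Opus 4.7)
The plan is to unpack the Binet formula (\ref{4}) for $\{v_{n}\}$. Using $\lfloor n/2 \rfloor = (n-\xi(n))/2$, it rewrites as
\[
\alpha^{n} + \beta^{n} \;=\; a^{(n+\xi(n))/2}\, b^{(n-\xi(n))/2}\, v_{n},
\]
which is exactly the shape in which $v_{m}$ and $v_{r}$ appear in the statement. So my first step would be to substitute this into both the right-hand side (turning it into $z^{m+2r}(\alpha^{m}+\beta^{m})$) and the middle term on the left (turning $a^{(r+\xi(r))/2}b^{(r-\xi(r))/2}v_{r}$ into $\alpha^{r}+\beta^{r}$). This eliminates all parity factors from the identity and reduces the claim to a purely algebraic statement in $\alpha$ and $\beta$.

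Next, I would take $z=\alpha$ (the case $z=\beta$ being completely symmetric) and expand directly, using only $\alpha\beta = -abc$. On the right, $\alpha^{m+2r}(\alpha^{m}+\beta^{m}) = \alpha^{2(m+r)} + \alpha^{2r}(\alpha\beta)^{m} = \alpha^{2(m+r)} + \alpha^{2r}(-abc)^{m}$. On the left, $(\alpha^{r}+\beta^{r})(-abc)^{m}\alpha^{r} = \alpha^{2r}(-abc)^{m} + (\alpha\beta)^{r}(-abc)^{m} = \alpha^{2r}(-abc)^{m} + (-abc)^{m+r}$. The $\pm(-abc)^{m+r}$ terms cancel, the $\alpha^{2(m+r)}$ terms match, and the remaining $\alpha^{2r}(-abc)^{m}$ appears on both sides.

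The main obstacle, such as it is, is purely notational: one must verify that the coefficients $a^{(n+\xi(n))/2}b^{(n-\xi(n))/2}$ in the statement really do agree with the normalizing factor $a^{\xi(n)}(ab)^{\lfloor n/2\rfloor}$ coming from (\ref{4}), for $n=m$ and $n=r$. Once that identification is made, the identity is a one-line consequence of $\alpha\beta=-abc$, requiring neither induction nor any further structural input from the recurrence. In effect, Lemma \ref{l3} is nothing more than the factorization $z^{2(m+r)} - a^{(m+\xi(m))/2}b^{(m-\xi(m))/2}v_{m}\, z^{m+2r}$ combined with the observation that $z^{r}(\alpha^{r}+\beta^{r}) - z^{2r} = (\alpha\beta)^{r} = (-abc)^{r}$ when $z\in\{\alpha,\beta\}$.
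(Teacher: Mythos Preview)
Your proposal is correct and follows essentially the same route as the paper: both arguments rewrite the Binet formula (\ref{4}) as $\alpha^{n}+\beta^{n}=a^{(n+\xi(n))/2}b^{(n-\xi(n))/2}v_{n}$ and then reduce the claimed identity to a direct consequence of $\alpha\beta=-abc$. The only cosmetic difference is that the paper transforms the left-hand side step by step into the right-hand side (first recognizing the first two terms as $(-abc)^{m}z^{2r}$, then factoring out $z^{m+2r}$), whereas you expand both sides separately to the common expression $z^{2(m+r)}+z^{2r}(-abc)^{m}$; the underlying computation is identical.
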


\begin{proof}
From the Binet formula of $\left\{ v_{n}\right\} $ and $\alpha \beta =-abc,$
it is clear to see that $z^{2r}=a^{\frac{r+\xi (r)}{2}}b^{\frac{r-\xi (r)}{2}%
}v_{r}z^{r}-\left( -abc\right) ^{r}.$ Thus we have%
\begin{eqnarray*}
&&-\left( -abc\right) ^{m+r}+a^{\frac{r+\xi (r)}{2}}b^{\frac{r-\xi (r)}{2}%
}v_{r}\left( -abc\right) ^{m}z^{r}+z^{2\left( m+r\right) } \\
&=&\left( -abc\right) ^{m}\left( a^{\frac{r+\xi (r)}{2}}b^{\frac{r-\xi (r)}{2%
}}v_{r}z^{r}-\left( -abc\right) ^{r}\right) +z^{2\left( m+r\right) } \\
&=&\left( -abc\right) ^{m}z^{2r}+z^{2\left( m+r\right) } \\
&=&z^{m+2r}\left( \left( -abc\right) ^{m}z^{-m}+z^{m}\right) \\
&=&z^{m+2r}\left( \beta ^{m}+\alpha ^{m}\right) \\
&=&z^{m+2r}a^{\frac{m+\xi (m)}{2}}b^{\frac{m-\xi (m)}{2}}v_{m}.
\end{eqnarray*}
\end{proof}

\begin{theorem}
\label{t4}For $n,m,r>0$, we have%
\begin{eqnarray*}
&&-\left( -c\right) ^{m+r}w_{n}+\left( -c\right) ^{m}\left( \frac{a}{b}%
\right) ^{\xi (r)\xi (n+1)}v_{r}w_{r+n}+w_{2\left( m+r\right) +n} \\
&=&\left( \frac{a}{b}\right) ^{\xi (m)\xi (n+1)}v_{m}w_{m+2r+n}.
\end{eqnarray*}
\end{theorem}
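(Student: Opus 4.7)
\medskip
\noindent\textbf{Proof proposal.} The plan is to parallel the proofs of Theorems \ref{t2} and \ref{t3}: start from the algebraic identity in Lemma \ref{l3}, lift it from the roots $\alpha,\beta$ to the sequence $\{w_n\}$ via the Binet formula of Theorem \ref{t1}, and then collapse the resulting parity/floor exponents.

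First I would apply Lemma \ref{l3} with $z=\alpha$, multiply both sides by $A\alpha^{n}$, apply it again with $z=\beta$, multiply by $B\beta^{n}$, and subtract. Since $v_r$, $v_m$, and $(-abc)^{m+r}$ are independent of $z$, this produces
\begin{align*}
&(A\alpha^{2(m+r)+n}-B\beta^{2(m+r)+n})
+a^{\tfrac{r+\xi(r)}{2}}b^{\tfrac{r-\xi(r)}{2}}v_r(-abc)^{m}(A\alpha^{r+n}-B\beta^{r+n})\\
&\qquad -(-abc)^{m+r}(A\alpha^{n}-B\beta^{n})
=a^{\tfrac{m+\xi(m)}{2}}b^{\tfrac{m-\xi(m)}{2}}v_m(A\alpha^{m+2r+n}-B\beta^{m+2r+n}).
\end{align*}
Next I would invoke the Binet formula to replace each bracket $A\alpha^{k}-B\beta^{k}$ by $a^{-\xi(k+1)}(ab)^{\lfloor k/2\rfloor}w_{k}$, with $k$ running over $n$, $r+n$, $2(m+r)+n$, and $m+2r+n$.

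The bulk of the work then lies in simplification. I would divide the whole identity by the common factor $a^{-\xi(n+1)}(ab)^{\lfloor n/2\rfloor+m+r}$ and push the remaining $a$- and $b$-powers into the coefficients. Three bookkeeping facts do the job: the floor identity $\lfloor (k+n)/2\rfloor-\lfloor k/2\rfloor-\lfloor n/2\rfloor=\xi(k)\xi(n)$, the parity identity $\xi(k+n+1)=\xi(n+1)+\xi(k)-2\xi(k)\xi(n+1)$ (equivalently, $\xi(k+n)=\xi(k)+\xi(n)-2\xi(k)\xi(n)$ as already used in Theorems \ref{t2} and \ref{t3}), and the rewriting $a^{\tfrac{k+\xi(k)}{2}}b^{\tfrac{k-\xi(k)}{2}}=(ab)^{\lfloor k/2\rfloor}a^{\xi(k)}$. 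Applying these to the $v_r$-term should collapse its coefficient to $(a/b)^{\xi(r)\xi(n+1)}$, and likewise the right-hand coefficient of $v_m\,w_{m+2r+n}$ to $(a/b)^{\xi(m)\xi(n+1)}$, while the term $-(-abc)^{m+r}w_n$ reduces to $-(-c)^{m+r}w_n$ and the leading $w_{2(m+r)+n}$-term emerges with coefficient $1$ because $2(m+r)+n$ has the same parity as $n$.

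The principal obstacle is purely clerical: keeping the $a$- versus $b$-exponents and the four different $\lfloor\cdot/2\rfloor$ terms straight through the division. No new structural idea beyond Lemma \ref{l3} and the Binet formula should be required, and in particular no case split on the parities of $m,n,r$ is needed as long as the three identities above are applied uniformly.
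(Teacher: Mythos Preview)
Your proposal is correct and follows exactly the paper's own route: specialize Lemma~\ref{l3} to $z=\alpha$ and $z=\beta$, multiply by $A\alpha^{n}$ and $B\beta^{n}$, subtract, replace each $A\alpha^{k}-B\beta^{k}$ via the Binet formula of Theorem~\ref{t1}, and then reduce the exponents with the parity identity $\xi(k+n)=\xi(k)+\xi(n)-2\xi(k)\xi(n)$. Your explicit bookkeeping identities (the floor formula and the rewriting $a^{(k+\xi(k))/2}b^{(k-\xi(k))/2}=(ab)^{\lfloor k/2\rfloor}a^{\xi(k)}$) are just the mechanics behind the paper's one-line simplification, not a different argument.
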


\begin{proof}
From Lemma \ref{l3}, we have%
\begin{eqnarray}
&&-\left( -abc\right) ^{m+r}+a^{\frac{r+\xi (r)}{2}}b^{\frac{r-\xi (r)}{2}%
}v_{r}\left( -abc\right) ^{m}\alpha ^{r}+\alpha ^{2\left( m+r\right) } 
\notag \\
&=&\alpha ^{m+2r}a^{\frac{m+\xi (m)}{2}}b^{\frac{m-\xi (m)}{2}}v_{m}.
\label{*}
\end{eqnarray}%
Similarly, we have%
\begin{eqnarray}
&&-\left( -abc\right) ^{m+r}+a^{\frac{r+\xi (r)}{2}}b^{\frac{r-\xi (r)}{2}%
}v_{r}\left( -abc\right) ^{m}\beta ^{r}+\beta ^{2\left( m+r\right) }  \notag
\\
&=&\beta ^{m+2r}a^{\frac{m+\xi (m)}{2}}b^{\frac{m-\xi (m)}{2}}v_{m}.
\label{**}
\end{eqnarray}%
By multiplying both sides of the equations (\ref{*}) and (\ref{**}) by $%
A\alpha ^{n}$ and $B\beta ^{n},$ respectively, we get%
\begin{eqnarray*}
&&-\left( A\alpha ^{n}-B\beta ^{n}\right) \left( -abc\right) ^{m+r} \\
&&+a^{\frac{r+\xi (r)}{2}}b^{\frac{r-\xi (r)}{2}}v_{r}\left( -abc\right)
^{m}\left( A\alpha ^{r+n}-B\beta ^{r+n}\right) \\
&&+\left( A\alpha ^{2\left( m+r\right) +n}-B\beta ^{2\left( m+r\right)
+n}\right) \\
&=&a^{\frac{m+\xi (m)}{2}}b^{\frac{m-\xi (m)}{2}}v_{m}\left( A\alpha
^{m+2r+n}-B\beta ^{m+2r+n}\right) .
\end{eqnarray*}%
Then by using the Binet formula of $\left\{ w_{n}\right\} $, we have 
\begin{eqnarray*}
&&-\left( ab\right) ^{m+r}\left( -c\right) ^{m+r}a^{\frac{n+\xi (n)}{2}}b^{%
\frac{n-\xi (n)}{2}}w_{n} \\
&&+\left( -c\right) ^{m}\left( ab\right) ^{m+r}a^{\frac{n+\xi (r+n)+\xi (r)}{%
2}}b^{\frac{n-\xi (r+n)-\xi (r)}{2}}v_{r}w_{r+n} \\
&&+\left( ab\right) ^{m+r}a^{\frac{n+\xi (n)}{2}}b^{\frac{n-\xi (n)}{2}%
}w_{2\left( m+r\right) +n} \\
&=&\left( ab\right) ^{m+r}a^{\frac{n+\xi (m+n)+\xi (m)}{2}}b^{\frac{n-\xi
(m+n)-\xi (m)}{2}}v_{m}w_{m+2r+n}.
\end{eqnarray*}%
By considering the identity $\xi (mn+r)=\xi (mn)+\xi (r)-2\xi (mn)\xi (r),$
we get the desired result.
\end{proof}

If we take $r=1,m=2,c=1$ in Theorem \ref{t4}, we get%
\begin{equation*}
\left( ab+2\right) w_{n+4}=w_{n}+a^{\xi (n+1)}b^{\xi (n)}w_{n+1}+w_{n+6}
\end{equation*}%
which reduces to the identity%
\begin{equation*}
\left( ab+1\right) w_{n+4}=w_{n}+a^{\xi (n+1)}b^{\xi (n)}w_{n+1}+a^{\xi
(n+1)}b^{\xi (n)}w_{n+5}
\end{equation*}%
in \cite[Theorem 4.15, Theorem 4.17]{Zhang}.

\begin{theorem}
\label{t5}The symbol $\dbinom{n}{i,j}$is defined by $\dbinom{n}{i,j}:=\frac{%
n!}{i!j!\left( n-i-j\right) !}.$ For $n,m,r,d>0,$ we have%
\begin{eqnarray}
w_{\left( m+2r\right) n+d} &=&v_{m}^{-n}\sum_{i+j+s=n}\dbinom{n}{i,j}\left(
-1\right) ^{s}\left( -c\right) ^{mj+\left( m+r\right) s}v_{r}^{j}w_{2\left(
m+r\right) i+rj+d}  \notag \\
&&\times \left( \frac{a}{b}\right) ^{\xi (r)\frac{j+\xi \left( j\right) }{2}%
-\xi (m)\frac{n+\xi \left( n\right) }{2}-\xi \left( rj\right) \xi \left(
d\right) +\xi \left( mn\right) \xi \left( d\right) }  \label{s2}
\end{eqnarray}%
and%
\begin{eqnarray}
w_{2\left( m+r\right) n+d} &=&\sum_{i+j+s=n}\dbinom{n}{i,j}\left( -1\right)
^{j}\left( -c\right) ^{s\left( m+r\right) +mj}v_{m}^{i}v_{r}^{j}w_{\left(
m+2r\right) i+rj+d}  \notag \\
&&\times \left( \frac{a}{b}\right) ^{\xi (m)\frac{i+\xi \left( i\right) }{2}%
+\xi (r)\frac{j+\xi \left( j\right) }{2}-\xi \left( mi\right) \xi \left(
rj\right) -\xi \left( mi+rj\right) \xi \left( d\right) }.  \label{s3}
\end{eqnarray}
\end{theorem}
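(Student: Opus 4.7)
The plan is to apply Lemma~\ref{l3} in two different ways and then follow the same Carlitz--Ferns strategy used in the proofs of Theorems~\ref{t2}, \ref{t3}, and \ref{t4}, the only novelty being that the binomial expansion is replaced by a trinomial one.

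First I would rewrite Lemma~\ref{l3} as the two dual identities
\begin{equation*}
a^{\frac{m+\xi(m)}{2}}b^{\frac{m-\xi(m)}{2}}v_{m}\,z^{m+2r} = z^{2(m+r)} + a^{\frac{r+\xi(r)}{2}}b^{\frac{r-\xi(r)}{2}}v_{r}(-abc)^{m}z^{r} - (-abc)^{m+r},
\end{equation*}
\begin{equation*}
z^{2(m+r)} = a^{\frac{m+\xi(m)}{2}}b^{\frac{m-\xi(m)}{2}}v_{m}\,z^{m+2r} - a^{\frac{r+\xi(r)}{2}}b^{\frac{r-\xi(r)}{2}}v_{r}(-abc)^{m}z^{r} + (-abc)^{m+r},
\end{equation*}
both valid for $z\in\{\alpha,\beta\}$. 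For (\ref{s2}) I raise the first identity to the $n$-th power and apply the trinomial theorem; for (\ref{s3}) I do the same to the second. In each case the right-hand side becomes a triple sum indexed by $i+j+s=n$, with general term (up to explicit powers of $a,b$) of the form $\dbinom{n}{i,j}(-1)^{s}v_{r}^{j}(-abc)^{mj+(m+r)s}z^{2(m+r)i+rj}$ for (\ref{s2}), respectively $\dbinom{n}{i,j}(-1)^{j}v_{m}^{i}v_{r}^{j}(-abc)^{mj+(m+r)s}z^{(m+2r)i+rj}$ for (\ref{s3}).

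Next, I multiply through by $A\alpha^{d}$ when $z=\alpha$ and by $B\beta^{d}$ when $z=\beta$, and subtract. The Binet formula of Theorem~\ref{t1} gives $A\alpha^{k}-B\beta^{k} = a^{-\xi(k+1)}(ab)^{\lfloor k/2\rfloor}w_{k}$, so every power of $z$ is converted into the corresponding $w$. The left sides become $w_{(m+2r)n+d}$ and $w_{2(m+r)n+d}$ respectively (up to explicit powers of $a,b$ and of $v_{m}$), and each summand on the right becomes a multiple of $w_{2(m+r)i+rj+d}$ or $w_{(m+2r)i+rj+d}$. Splitting $(-abc)^{mj+(m+r)s}=(ab)^{mj+(m+r)s}(-c)^{mj+(m+r)s}$ produces the sign and the $(-c)$-power recorded in the theorem, and solving for the target $w$ on the left introduces the factor $v_{m}^{-n}$ in (\ref{s2}) or leaves $v_{m}^{i}$ inside the sum in (\ref{s3}).

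The main obstacle will be the final step: collapsing all residual powers of $a$ and $b$ into the single $(a/b)^{\cdots}$ factor displayed in the theorem. These exponents involve the differences $\lfloor((m+2r)n+d)/2\rfloor - \lfloor(2(m+r)i+rj+d)/2\rfloor$ and $\xi((m+2r)n+d+1) - \xi(2(m+r)i+rj+d+1)$, together with analogous quantities for (\ref{s3}). Repeated use of the identities $\lfloor k/2\rfloor = (k-\xi(k))/2$ and $\xi(mn+r) = \xi(mn) + \xi(r) - 2\xi(mn)\xi(r)$, combined with $\xi(2\ell)=0$, will collapse each of these quantities to the exponents shown in (\ref{s2}) and (\ref{s3}). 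The computation is conceptually identical to the bookkeeping performed at the end of the proof of Theorem~\ref{t3}, but markedly longer because of the triple summation and the extra shift $d$.
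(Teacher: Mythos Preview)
Your proposal is correct and follows essentially the same approach as the paper: apply Lemma~\ref{l3}, expand the $n$-th power via the multinomial theorem, multiply by $z^{d}$, and convert with the Binet formula of Theorem~\ref{t1}. The paper in fact omits the final parity bookkeeping that you flag as the main obstacle, so your outline is, if anything, more detailed than the original.
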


\begin{proof}
By using Lemma \ref{l3} and the multinomial theorem, we obtain the following
identities:%
\begin{eqnarray*}
&&a^{n\frac{m+\xi (m)}{2}}b^{n\frac{m-\xi (m)}{2}}v_{m}^{n}z^{\left(
m+2r\right) n} \\
&=&\sum_{i+j+s=n}\dbinom{n}{i,j}\left( -1\right) ^{s}\left( -abc\right)
^{s\left( m+r\right) +mj}a^{j\frac{r+\xi (r)}{2}}b^{j\frac{r-\xi (r)}{2}%
}v_{r}^{j}z^{2\left( m+r\right) i+rj}\text{\ \ \ \ \ \ \ \ \ \ \ \ \ \ \ \ \
\ \ \ \ \ \ }
\end{eqnarray*}%
and%
\begin{eqnarray*}
&&z^{2\left( m+r\right) n} \\
&=&\sum_{i+j+s=n}\dbinom{n}{i,j}\left( -1\right) ^{j}\left( -abc\right)
^{s\left( m+r\right) +mj}a^{\frac{im+i\xi (m)+jr+j\xi (r)}{2}}b^{\frac{%
im-i\xi (m)+jr-j\xi (r)}{2}}v_{m}^{i}v_{r}^{j}z^{\left( m+2r\right) i+rj}.
\end{eqnarray*}%
By multiplying both sides in the preceding equalities by $z^{d}$ and using
the Binet formula of $\left\{ w_{n}\right\} ,$ we have (\ref{s2}) and (\ref%
{s3}), respectively.
\end{proof}

We note that for the computational simplicity, the equation (\ref{s3}) is
more practical than the equation (\ref{s2}).

From (\ref{s3}), by using the decomposition%
\begin{equation*}
\sum_{i+j+s=n}=\sum_{i+j+s=n,i=0}+\sum_{i+j+s=n,i\neq 0}
\end{equation*}%
and Theorem \ref{t3}, we get the following corollary.

\begin{corollary}
\label{c3}For $n,m,r,d>0$, we have%
\begin{equation*}
w_{2\left( m+r\right) n+d}-\left( -1\right) ^{n\left( m+1\right)
}c^{mn}w_{2rn+d}\equiv 0\left( \func{mod}v_{m}\right) .
\end{equation*}
\end{corollary}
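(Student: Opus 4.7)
The plan is to apply Theorem \ref{t5} (equation \eqref{s3}) and split the resulting multinomial sum according to the value of the index $i$, then recognize the $i=0$ piece as an instance of the identity in Theorem \ref{t3}.

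First, I would expand $w_{2(m+r)n+d}$ via \eqref{s3}, obtaining
\[
w_{2(m+r)n+d}=\sum_{i+j+s=n}\binom{n}{i,j}(-1)^{j}(-c)^{s(m+r)+mj}v_{m}^{i}v_{r}^{j}w_{(m+2r)i+rj+d}\Bigl(\tfrac{a}{b}\Bigr)^{E(i,j,d)},
\]
where $E(i,j,d)$ denotes the exponent appearing in \eqref{s3}. Following the hint, I split the sum as $\sum_{i+j+s=n}=\sum_{i=0}+\sum_{i\ge 1}$. Every term in the second piece carries a factor $v_{m}^{i}$ with $i\ge 1$, hence is divisible by $v_{m}$; this handles the ``tail'' of the decomposition immediately and reduces the problem to identifying the $i=0$ piece.

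Next, I would simplify the $i=0$ summand. Here $\binom{n}{0,j}=\binom{n}{j}$, and the exponent of $(-c)$ becomes
\[
s(m+r)+mj=(n-j)(m+r)+mj=nm+(n-j)r,
\]
so $(-c)^{s(m+r)+mj}=(-c)^{nm}(-c)^{(n-j)r}$. Also $\xi(mi)=0$ and $\xi(mi+rj)=\xi(rj)$ when $i=0$, so the power of $a/b$ collapses to $\xi(r)(j+\xi(j))/2-\xi(rj)\xi(d)$. Pulling the common factor $(-c)^{nm}$ outside, the $i=0$ part reads
\[
(-c)^{nm}\sum_{j=0}^{n}\binom{n}{j}(-1)^{j}(-1)^{(n-j)r}c^{(n-j)r}v_{r}^{j}w_{rj+d}\Bigl(\tfrac{a}{b}\Bigr)^{\xi(r)\frac{j+\xi(j)}{2}-\xi(rj)\xi(d)}.
\]

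Finally, I would compare this with Theorem \ref{t3} applied with $(m,n,r)\mapsto(r,n,d)$, which gives
\[
w_{2rn+d}=\sum_{j=0}^{n}\binom{n}{j}(-1)^{(r+1)(n-j)}\Bigl(\tfrac{a}{b}\Bigr)^{\xi(r)\frac{j+\xi(j)}{2}-\xi(rj)\xi(d)}c^{r(n-j)}v_{r}^{j}w_{rj+d}.
\]
Up to the scalar prefactor, the two sums share identical binomial, $a/b$, $c^{r(n-j)}$, $v_{r}^{j}$ and $w_{rj+d}$ factors; a short parity check using $n+(n-j)\equiv j\pmod 2$ shows that $(-c)^{nm}\cdot(-1)^{j+(n-j)r}=(-1)^{n(m+1)}c^{nm}\cdot(-1)^{(r+1)(n-j)}$. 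Consequently the $i=0$ part equals $(-1)^{n(m+1)}c^{mn}w_{2rn+d}$ on the nose, and subtracting this quantity from $w_{2(m+r)n+d}$ leaves exactly the $i\ge 1$ sum, which lies in the ideal generated by $v_{m}$. The main obstacle is purely bookkeeping: tracking the parity function $\xi$ and the signs through the simplification; no deeper ingredient beyond Theorems \ref{t3} and \ref{t5} is needed.
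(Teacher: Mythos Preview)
Your proposal is correct and follows precisely the approach sketched in the paper: apply \eqref{s3}, split the multinomial sum into the pieces $i=0$ and $i\ge 1$, observe that the latter is divisible by $v_m$, and identify the former with $(-1)^{n(m+1)}c^{mn}w_{2rn+d}$ via Theorem~\ref{t3} (with $(m,n,r)\mapsto(r,n,d)$). Your detailed bookkeeping of the $\xi$-exponents and the sign verification $(-1)^{nm+j+(n-j)r}=(-1)^{n(m+1)+(r+1)(n-j)}$ are exactly the computations the paper leaves implicit.
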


\section{Acknowledgement}

The first author is grateful to Dr. Mohamed Salim for the arrangement of her
visit to United Arab Emirates University (UAEU) in February 2019. It is
supported by UAEU UPAR Grant G00002599 (Fund No. 31S314).

\end{document}